\documentclass[11pt,reqno]{amsart}
\usepackage{amsmath, amssymb, amsthm}

\renewcommand{\(}{\left\(}
\renewcommand{\)}{\right\)}
\renewcommand{\[}{\left\[}
\renewcommand{\]}{\right\]}

\numberwithin{equation}{section}
 \theoremstyle{plain}
\newtheorem{theorem}{Theorem}[section]

\newtheorem{conjecture}[theorem]{Conjecture}

\newtheorem{problem}[]{Problem}

   \makeatletter
\def\proof{\@ifnextchar[{\@oproof}{\@nproof}}
\def\@oproof[#1][#2]{\trivlist\item[\hskip\labelsep\textit{#2 Proof of\
#1.}~]\ignorespaces}
\def\@nproof{\trivlist\item[\hskip\labelsep\textit{Proof.}~]\ignorespaces}

\makeatother
\begin{document}
\title[Elementary symmetric polynomials and a potentially injective family of maps on partitions]{Elementary symmetric polynomials and a potentially injective family of maps on partitions}
\author{Aman Devnani}
\address{Aman Devnani, Birla Institute of Technology \& Science Pilani, Vidya vihar, Pilani, Rajasthan - 333031, India.}
\email{f20231026@pilani.bits-pilani.ac.in}


\author{Pramod Eyyunni}
\address{Pramod Eyyunni, Department of Mathematics, Birla Institute of Technology \& Science Pilani, Vidya vihar, Pilani, Rajasthan - 333031, India.} 
\email{pramod.eyyunni@pilani.bits-pilani.ac.in}

\thanks{$2020$ \textit{Mathematics Subject Classification.} Primary 11P81, 05A17; Secondary 05A20. \\
\textit{Keywords and phrases.} Elementary symmetric polynomials, Partitions, Injectivity of $pre_k$, Inequalities}

\begin{abstract}
In this article, we provide an infinite family of examples to disprove a recent conjecture due to Ballantine and her collaborators on the injectivity of a class of maps, namely $pre_k$, defined on integer partitions. These maps arise from applying the sequence of elementary symmetric polynomials to integer partitions, where $pre_k$ is associated with the $k$th polynomial. Subsequently, we state a modified version of their conjecture. Throwing fresh light on these class of maps, we study the inter-relationships between them, deviating from the approaches so far, which study these maps one at a time. Though one case of the conjecture ($k=2$) has now been settled independently by the work of Ballantine and collaborators, and Li, we provide alternate proofs of three subcases corresponding to this settled case. We also discuss lower bounds for the number of partitions of $n$ which are in the image of the map $pre_2$.  
\end{abstract}  
\maketitle
\section{Introduction}
The study of the action of elementary symmetric polynomials on integer partitions is a topic of recent interest. Given the $k$th elementary symmetric polynomial, namely,
\begin{equation}
e_k(X_1, X_2, \dots, X_n) =
\begin{cases}
 \displaystyle\sum_{1\leq i_1 < i_2 < \cdots < i_k \leq n} X_{i_1}X_{i_2} \cdots X_{i_k}, \quad \textup{if} \ k \leq n, \\
0, \quad \textup{if} \ k > n,
\end{cases}
\end{equation}
Ballantine, Beck and Merca \cite{BBM24} defined, for $\lambda = (\lambda_1, \lambda_2, \dots, \lambda_{\ell})$, $pre_k(\lambda)$ to be the partition with the multiset of parts given by
\begin{equation*}
\{ \lambda_{i_1} \lambda_{i_2} \cdots \lambda_{i_k} \ | \ 1 \leq i_1 < i_2 < \cdots < i_k \leq \ell\}, \quad \textup{if} \ \ell \geq k,
\end{equation*}
and to be the empty partition $\emptyset$ if $\ell < k$.  For example, $pre_2 (7, 4, 4) = (7 \times 4, \ 7\times 4, \ 4\times 4) = (28, 28, 16)$. The special case $\ell = k$ has been studied under the name `norm' of the partition $\lambda$ \cite{norm}. Note also that $pre_1(\lambda) = \lambda$ itself. If we let $\mathcal{P}(n)$ to denote the set of partitions of $n$, we have that $pre_1$ is injective on $\mathcal{P}(n)$ since it is the identity map. In \cite[Conjectures 12, 13]{BBM24}, the following conjectures were proposed:
\begin{conjecture}\label{conj12}
For $n \geq 0$, the map $pre_2$ is injective on $\mathcal{P}(n)$.
\end{conjecture}
\begin{conjecture}\label{conj13}
For $k \geq 3$, $pre_k$ is injective on the subset of $\mathcal{P}(n)$ consisting of partitions with at least $k$ parts.
\end{conjecture}
We first disprove Conjecture \ref{conj13} in the form given above. This is owing to the failure of the injectivity of $pre_k$ on partitions of $n$ with $k$ parts, for infinitely many $n$. In Theorem \ref{notinjk} below, we give an infinite family of pairs of partitions of the same numbers with $k$ parts which have the same image under $pre_k$, for each $k \geq 3$.
\begin{theorem}\label{notinjk}
Conjecture \ref{conj13} is false. More precisely, $pre_k$ is not injective on partitions of $n$ with $k$ parts, for infinitely many $n$.
\end{theorem}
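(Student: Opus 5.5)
The plan is to use the degenerate case $\ell = k$. When $\lambda$ has exactly $k$ parts, the multiset $\{\lambda_{i_1}\cdots\lambda_{i_k}\}$ has a single element. So $pre_k(\lambda)$ is the one-part partition $(\lambda_1\lambda_2\cdots\lambda_k)$, which is the norm of $\lambda$. Non-injectivity of $pre_k$ on partitions of $n$ with $k$ parts therefore reduces to a purely arithmetic task: find two distinct multisets of $k$ positive integers with the same sum $n$ and the same product. Such pairs lie in the subset of $\mathcal{P}(n)$ of partitions with at least $k$ parts, so they also contradict Conjecture \ref{conj13}.

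First I would find one base pair for $k=3$ by a small search over triples with equal sum and product. The pair $(6,6,1)$ and $(9,2,2)$ works: both have sum $13$ and product $36$.

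Next I would turn this into an infinite family by scaling. For a positive integer $m$, take $(6m,6m,m)$ and $(9m,2m,2m)$. Each part is multiplied by $m$, so the sums both become $13m$ and the products both become $36m^3$. The two partitions remain distinct.

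To pass to general $k \geq 3$, I would append $k-3$ parts equal to $1$ to both partitions. This gives
\[
\lambda^{(m)} = (6m,6m,m,\underbrace{1,\dots,1}_{k-3}), \qquad \mu^{(m)} = (9m,2m,2m,\underbrace{1,\dots,1}_{k-3}).
\]
Appending ones preserves both the equality of sums and the equality of products. So for every $m \geq 1$, the partitions $\lambda^{(m)}$ and $\mu^{(m)}$ are distinct partitions of $n = 13m + k - 3$, each with exactly $k$ parts, and
\[
pre_k(\lambda^{(m)}) = pre_k(\mu^{(m)}) = (36m^3).
\]
As $m$ ranges over the positive integers, $n = 13m+k-3$ takes infinitely many values, which proves Theorem \ref{notinjk}.

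The only real obstacle is finding the base pair, and a quick search over small triples settles it. Once $\ell=k$ is recognized as collapsing $pre_k$ to a single product, the rest is routine verification.
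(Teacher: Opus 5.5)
Your proof is correct and follows the paper's approach: you use the case $\ell = k$, where $pre_k$ returns the single product of the parts, take the base pair $(6,6,1)$, $(9,2,2)$, scale it by $m$, and pad with $k-3$ ones. The only cosmetic difference is in how the infinite family for general $k$ is built. The paper pads the unscaled pair, and for the infinitely-many-$n$ claim it pads its prime-parameter family $(q(1+m(p-1)),\,1+m(q-1),\,p)$, $(p(1+m(q-1)),\,1+m(p-1),\,q)$. You instead pad the scaled pair $(6m,6m,m)$, $(9m,2m,2m)$ directly, giving $n = 13m+k-3$.
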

But this does not render Conjecture \ref{conj13} entirely futile because of the following reason: If a partition $\lambda$ has $\ell$ parts, then note that for $k \leq \ell$, $pre_k(\lambda)$ has $\binom{\ell}{k}$ parts. Therefore, if $\lambda$ and $\mu$ have $\ell_1$ and $\ell_2$ parts respectively such that $pre_k(\lambda) = pre_k(\mu)$, then the number of parts of both the image partitions must be the same, i.e., $\binom{\ell_1}{k} = \binom{\ell_2}{k}$. This implies that $\ell_1 = \ell_2$ as $\binom{n}{k}$ is an increasing function of $n$ for fixed $k$. That means partitions with different numbers of parts (but both of them at least $k$) will necessarily have different images under the map $pre_k$. Consequently, we modify Conjecture \ref{conj13} as follows.
\begin{conjecture} \label{conj13new}
For $k \geq 3$, $pre_k$ is injective on partitions of $n$ with $j$ parts for each $j \geq k+1$.
\end{conjecture}
As a result of this rephrasal, we observe that it may be fruitful to deal with the family of maps $pre_k, \ k \geq 1$ as a collective unit rather than trying to prove that each one of them is injective separately. In this direction, we prove Theorem \ref{prekprel-k}. 
\begin{theorem}\label{prekprel-k}
 For some $k$, $1 \leq k < \ell$, if $pre_k$ is injective on partitions of $n$ with $\ell$ parts, then $pre_{\ell - k}$ is also injective on this same set of partitions.
\end{theorem}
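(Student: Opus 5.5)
The plan is to use complementation. Every $(\ell-k)$-subset of $\{1,\dots,\ell\}$ is the complement of a unique $k$-subset. So if $\lambda=(\lambda_1,\dots,\lambda_\ell)$ and $P(\lambda)=\lambda_1\lambda_2\cdots\lambda_\ell$, then the multiset of parts of $pre_{\ell-k}(\lambda)$ is
\begin{equation*}
\{ P(\lambda)/(\lambda_{i_1}\cdots\lambda_{i_k}) \ | \ 1 \leq i_1 < \cdots < i_k \leq \ell\}.
\end{equation*}
That is, $pre_{\ell-k}(\lambda)$ is obtained from $pre_k(\lambda)$ by replacing each part $x$ with $P(\lambda)/x$. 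All parts are positive integers, so division is legitimate. The argument is by contrapositive: we will show that $pre_{\ell-k}(\lambda)=pre_{\ell-k}(\mu)$ forces $pre_k(\lambda)=pre_k(\mu)$, whenever $\lambda,\mu$ are partitions of $n$ with $\ell$ parts. Injectivity of $pre_k$ then gives $\lambda=\mu$.

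The first step is to recover $P(\lambda)$ from $pre_{\ell-k}(\lambda)$ alone. Each index lies in exactly $\binom{\ell-1}{\ell-k-1}$ of the $(\ell-k)$-subsets. Hence the product of all parts of $pre_{\ell-k}(\lambda)$ equals $P(\lambda)^{\binom{\ell-1}{\ell-k-1}}$. The exponent is positive because $\ell-k\geq 1$. Since positive real numbers have unique positive $m$th roots, $P(\lambda)$ is determined by $pre_{\ell-k}(\lambda)$. So $pre_{\ell-k}(\lambda)=pre_{\ell-k}(\mu)$ implies $P(\lambda)=P(\mu)=:P$.

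Next, write $pre_k(\lambda)$ as the multiset $\{P/y : y \in pre_{\ell-k}(\lambda)\}$, which is just the complementation identity read in reverse. The map $y\mapsto P/y$ is a bijection of the positive reals. Therefore equal multisets $pre_{\ell-k}(\lambda)=pre_{\ell-k}(\mu)$ are sent to equal multisets, giving $pre_k(\lambda)=pre_k(\mu)$.

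The only step needing care is the recovery of $P(\lambda)$ in the first step. Without it the reciprocal map would not be known to be the same for $\lambda$ and $\mu$. The counting of how often each index appears settles this. Everything else is bookkeeping about multisets, including parts with multiplicity, which the bijection $y\mapsto P/y$ preserves.
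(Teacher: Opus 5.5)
Your proof is correct and follows the paper's route: you recover $\prod_i \lambda_i$ from the product of the parts of $pre_{\ell-k}(\lambda)$ using the exponent $\binom{\ell-1}{\ell-k-1}$, then pass to complementary $k$-subsets to get $pre_k(\lambda)=pre_k(\mu)$, and finally invoke injectivity of $pre_k$. Your explicit use of the bijection $y\mapsto P/y$ on multisets makes precise the ``correspondence'' that the paper only asserts; the argument is direct rather than by contrapositive, but that is only a matter of wording.
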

For instance, since $pre_2$ is injective on partitions of $n$ with five and six parts (see Theorem \ref{456} below), we deduce that $pre_3$ is injective on partitions of $n$ with five parts and $pre_4$ is injective on partitions of $n$ with six parts. Also, for $k \geq 2$, $pre_k$ is injective on partitions of $n$ with $k+1$ parts, since $pre_1$, being the identity map on partitions, is injective on partitions of $n$ with $k+1$ parts.

Apart from framing Conjecture \ref{conj12} in \cite{BBM24}, Ballantine et al. proved the truth of it for partitions having at most three parts, using a `lattice of parts of $pre_2(\lambda)$' approach \cite[Proposition 1]{BBM24}. This simply means that the partition $\lambda$ gives rise to a partial order on the parts of $pre_2(\lambda)$. They also requested a proof along these lines for partitions with more than three parts. We provide such a proof for partitions having at most six parts in Theorem \ref{456}. Subsequently, Ballantine et al. \cite{BNTWZ} and Li \cite{SJL} independently proved the whole of Conjecture \ref{conj12} via alternate methods.
\begin{theorem}\label{456}
The map $pre_2$ is injective on partitions of $n$ with four, five, and six parts.
\end{theorem}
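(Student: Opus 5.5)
The plan is to recover $\lambda=(\lambda_1\ge\cdots\ge\lambda_\ell)$ from $\mu=pre_2(\lambda)$ and $n$. The data come from two sources. The first is the \emph{lattice of parts} of $\mu$: the largest part of $\mu$ is $\lambda_1\lambda_2$, the second largest is $\lambda_1\lambda_3$, the smallest is $\lambda_{\ell-1}\lambda_\ell$, and the second smallest is $\lambda_{\ell-2}\lambda_\ell$. The second source is the symmetric-function invariants of $\lambda$ that are visible from $\mu$. Writing $p_m,e_m$ for the power sums and elementary symmetric functions of the parts of $\lambda$, we have
\begin{itemize}
\item $e_1=n$ and $e_2=|\mu|$, hence $p_2=n^2-2|\mu|$;
\item $\prod \mu_i=e_\ell^{\ell-1}$, hence $e_\ell$;
\item $\sum\mu_i^m=(p_m^2-p_{2m})/2$, hence $p_4$ from $p_2$, $p_8$ from $p_4$, and so on.
\end{itemize}
Because $\ell$ is fixed, knowing enough $e_j$ determines the multiset of parts, and hence $\lambda$.

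\textbf{Four parts.} Here $e_1,e_2,e_4$ are known, and the only missing coefficient is $e_3$. By Newton's identities $p_3=e_1p_2-e_2p_1+3e_3$ and $p_4=e_1p_3-e_2p_2+e_3p_1-4e_4$. Substituting, $e_3$ enters $p_4$ with coefficient $3e_1+p_1=4n\neq 0$. So $e_3$ is determined by the known $p_4$, all four $e_j$ are known, and $\lambda$ is determined.

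\textbf{Five parts.} Here I would use the lattice directly. Put $a=\lambda_3$. The known ratios $r=\lambda_2/\lambda_3=\mu_1/\mu_2$ and $s=\lambda_4/\lambda_3=\mu_{10}/\mu_9$ give $\lambda_2=ra$ and $\lambda_4=sa$. Then $\lambda_1=\mu_1/(ra)$ and $\lambda_5=\mu_{10}/(sa)$. Moreover $e_5=\lambda_1\lambda_2\cdot\lambda_3\cdot\lambda_4\lambda_5=\mu_1\mu_{10}\,a$, so $a=e_5/(\mu_1\mu_{10})$ is determined. All parts are then fixed, and two partitions with the same image coincide.

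\textbf{Six parts.} The same idea leaves one degree of freedom. From $e_6=\mu_1\mu_{15}\lambda_3\lambda_4$ we know $c=\lambda_3\lambda_4$. Setting $a=\lambda_3$, the parts become
\[
\lambda_1=\frac{\mu_1}{ra},\quad \lambda_2=ra,\quad \lambda_3=a,\quad \lambda_4=\frac{c}{a},\quad \lambda_5=\frac{sc}{a},\quad \lambda_6=\frac{\mu_{15}a}{sc},
\]
with $r,s$ known ratios from the lattice. The condition $p_1=n$ reads $\alpha a+\beta/a=n$ for explicit positive $\alpha,\beta$. It therefore has at most two roots $a_1,a_2$, with $a_1a_2=\beta/\alpha$. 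The constraint $p_2=n^2-2|\mu|$ reads $\alpha'a^2+\beta'/a^2=\text{const}$. If $a_1\ne a_2$ both satisfy it, then $(a_1^2-a_2^2)\bigl(\alpha'-\beta'/(a_1a_2)^2\bigr)=0$, which forces the degeneracy $\alpha'\beta^2=\beta'\alpha^2$.

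The main obstacle is ruling out this degenerate case. The two candidates are then interchanged by $a\mapsto\beta/(\alpha a)$, which (up to scaling) is the ``reversal'' $\lambda_i\mapsto K/\lambda_{7-i}$. My first attempt would be to show that the second candidate violates the ordering $\lambda_1\ge\cdots\ge\lambda_6$, or that it mislabels which part of $\mu$ equals $\lambda_1\lambda_3$ or $\lambda_4\lambda_6$. Failing that, I would bring in $p_4$ (known from $\sum\mu_i^2$), which yields an equation of the form $\alpha''a^4+\beta''/a^4=\text{const}$ and is independent of the degeneracy for $p_2$. If both $p_2$ and $p_4$ are degenerate simultaneously, the two candidate multisets have the same $e_1,e_2,e_4,e_6$ and are reciprocal up to scaling. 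Comparing $e_5$ with $e_1$ after scaling, or invoking integrality, should then force $a_1=a_2$.
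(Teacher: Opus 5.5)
Your four-part argument is correct and differs from the paper's. It uses no lattice information, since $e_3$ enters $p_4$ with coefficient $4n$; the paper instead splits into cases according to how the two middle parts of $pre_2$ match. Your five-part argument is essentially the paper's.

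\textbf{The gap is the six-part case.} You leave it open exactly where the work lies, and the degenerate case is not vacuous. Take $\lambda=(70,66,55,30,20,11)$ and $\lambda'=(77,60,50,33,22,10)$. Both are partitions of $252$ with the same $p_2=13702$ and the same product of parts. The two largest parts of $pre_2$ ($4620,3850$) and the two smallest ($330,220$) agree and sit in the same lattice positions. Yet $3630=\lambda_2\lambda_3$ is a part of $pre_2(\lambda)$ but not of $pre_2(\lambda')$. So every quantity you used agrees, both candidates are properly ordered, and both are integral; neither an ordering check nor integrality can close the case.

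Two of your claims are also inaccurate. First, passing from $a_1$ to $a_2$ is not a reversal $\lambda_i\mapsto K/\lambda_{7-i}$: it multiplies $\lambda_2,\lambda_3,\lambda_6$ by $t=a_2/a_1$ and divides $\lambda_1,\lambda_4,\lambda_5$ by $t$ (here $t=10/11$). Second, $p_1,p_2,p_4$ of the six parts determine only $e_1$, $e_2$ and $e_1e_3-e_4$, not $e_4$.

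\textbf{The missing idea} is to apply Newton's identities to the two three-element groups rather than to all six parts. Write $(\lambda_2,\lambda_3,\lambda_6)=(u_1a,u_2a,u_3a)$ and $(\lambda_1,\lambda_4,\lambda_5)=(v_1/a,v_2/a,v_3/a)$, with $u_i,v_i$ fixed by $pre_2(\lambda)$.

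Suppose $a_1\neq a_2$ both satisfy the $p_m$-equation. Exactly as in your computation, $\sum_i(a_1a_2u_i)^m=\sum_i v_i^m$. Use this for $m=1,2,4$; all three $p_m$ are known from $n$ and $pre_2(\lambda)$. For three numbers $p_4=e_1^4-4e_1^2e_2+2e_2^2+4e_1e_3$. Hence the triples $\{a_1a_2u_i\}$ and $\{v_i\}$ have the same $e_1,e_2,e_3$, so they coincide as multisets. Then $\{a_2u_i\}\cup\{v_i/a_2\}=\{v_i/a_1\}\cup\{a_1u_i\}$, so the two candidates are the same partition.

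The paper closes this step differently, by the lattice. It first matches the nine products $\lambda_i\lambda_j$ with $i\in\{1,4,5\}$, $j\in\{2,3,6\}$; this is your parametrization, with a scale factor $k$. It then compares the largest remaining part, $\max(\lambda_1\lambda_4,\lambda_2\lambda_3)$, on both sides. Either this gives $k^2=1$ at once, or it gives $\lambda_1\lambda_4=\lambda'_2\lambda'_3$. In the latter case the largest of the remaining four parts, $\lambda_1\lambda_5$, must equal $\lambda'_1\lambda'_5$, which again gives $k^2=1$.
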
 
We next consider the question: How many partitions of $n$ are in the image of the map $pre_2$? The answer is at least one for each $n$, since the partition $n = pre_2(\lambda)$, for $\lambda = n + 1$, falls under this class. Let $Pre_2(n)$ be the set of partitions of $n$ which are in the image of $pre_2$. Set $pre_2(n) := |Pre_2(n)|$. From our earlier observation, we have that $pre_2(n) \geq 1$ for all $n \geq 1$. It is interesting to see when there are other such partitions for a given $n$. We have the following result in this regard.
\begin{theorem}\label{pre2nsize}
For $n \geq 1$, we have that
$$
pre_2(n) \geq 
\begin{cases}
\frac{\tau(n+1)}{2}, \quad \text{if $n+1$ is not a perfect square}, \\
\frac{\tau(n+1) + 1}{2}, \quad \text{if $n+1$ is a perfect square},
\end{cases}
$$
where $\tau(n)$ counts the number of positive divisors of $n$.
\end{theorem}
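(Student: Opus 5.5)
The plan is to produce, for each factorization of $n+1$, a partition of $n$ lying in $Pre_2(n)$, and then to check that distinct factorizations give distinct image partitions. For every divisor pair $n+1 = ab$ with $1 \leq a \leq b$, I take the two-part partition $\lambda = (b, a)$ when $a \geq 2$, and the one-part partition $\lambda = (n+1)$ when $a=1$. The target image is the single-part partition $(ab - 1) = (n)$, so the two-part choice needs adjusting: $pre_2(b,a) = (ab)$ is a partition of $ab = n+1$, not of $n$. The construction must therefore land on partitions of $n$ itself.

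To do this I add a part equal to $1$. For $a \geq 2$, let $\lambda_{a,b} = (b, a-1, 1)$ with $b \geq a-1 \geq 1$. Then
\begin{equation*}
pre_2(\lambda_{a,b}) = \bigl(b(a-1),\, b,\, a-1\bigr),
\end{equation*}
whose parts sum to $ab - b + b + a - 1 = ab + a - 1$, and this is not $n$ in general. So I instead choose the parts to realize the identity $e_2(x,y,z) = n$ directly. The cleanest option is the two-part form: $pre_2(x,y) = (xy)$ is a partition of $xy$. Writing $n = xy$ gives divisors of $n$, not of $n+1$, so I use a shift $(x, y, 1)$, which gives $e_2 = xy + x + y = (x+1)(y+1) - 1$. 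Hence for $n+1 = ab$ with $2 \leq a \leq b$, the partition $\lambda = (b-1, a-1, 1)$ satisfies
\begin{equation*}
pre_2(\lambda) = \bigl((a-1)(b-1),\, b-1,\, a-1\bigr) \in Pre_2(n),
\end{equation*}
where the part $(a-1)(b-1)$ is dropped if it is zero. This cannot happen here, since $a \geq 2$. The pair $a = 1$, $b = n+1$ is handled by $\lambda = (n+1)$ together with $\lambda = (n, 1)$, and both have image $(n)$. I count this as one partition.

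The main step is distinctness. The images for different unordered pairs $\{a,b\}$ with $2 \leq a \leq b$ are three-part partitions whose two smallest parts are $a-1 \leq b-1$. Here the middle part is $b-1$, because $(a-1)(b-1) \geq b-1$. These are distinct for distinct $(a,b)$, and they differ from the one-part partition $(n)$. The obstacle is the degenerate case $a=2$, where the parts $(b-1, b-1, 1)$ still have three parts and so remain fine.

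For the count, the number of unordered factorizations $n+1 = ab$ with $a \leq b$ is $\tau(n+1)/2$ when $n+1$ is not a square, and $(\tau(n+1)+1)/2$ when it is. Each factorization, including $a=1$, contributes a distinct element of $Pre_2(n)$, which gives the stated bound. The small case $n+1$ prime is consistent with this, since there the bound is $1$.
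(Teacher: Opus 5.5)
Your final construction, $\lambda=(b-1,a-1,1)$ for each factorization $n+1=ab$ with $2\le a\le b$ together with $(n,1)$ for the trivial factorization, is the same as the paper's argument after the reindexing $n+1=(a+1)(b+1)$, and your distinctness check via the smallest part $a-1$ makes explicit a step the paper leaves implicit. One correction: the one-part partition $(n+1)$ has $pre_2\bigl((n+1)\bigr)=\emptyset$ by definition, so only $(n,1)$ shows that $(n)\in Pre_2(n)$; also delete the abandoned false starts from the write-up.
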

 
In particular, if $n+1$ is composite and not a perfect square, then it has at least four divisors and consequently, $pre_2(n) \geq 2$. For instance, if $n = 23$, then the following partitions of $n$ are in the image of $pre_2$, namely, $11 + 11 + 1 \ (= pre_2(11+1+1)), \ 14+7+2 \ (= pre_2(7+2+1)), \ 15 + 5 + 3 \ (= pre_2(5+3+1))$. Thus, $pre_2(23) \geq 3$.

This paper is organized as follows. In Section \ref{disprove}, we prove Theorems \ref{notinjk} and \ref{prekprel-k}. Next, in Section \ref{456proof}, we do the proof of Theorem \ref{456}, on the injectivity of $pre_2$ on the set of partitions of $n$ with four, five and six parts. Moving on, we initiate the study of $pre_2(n)$ in Section \ref{pre2n} by deriving the proof of Theorem \ref{pre2nsize}. Finally, we mull over possible directions to explore via some problems in the concluding section. 
 
\section{Disproving Conjecture \ref{conj13} and linking pairs of $pre_k$ maps} \label{disprove}
We first exhibit an infinite three parameter family of pairs of partitions that disproves Conjecture \ref{conj13}.
\begin{proof}[Theorem \ref{notinjk}][]
If $k \geq 3$, let $\alpha_k = (6, 6, \underbrace{1, \cdots, 1}_{k-2}), \ \beta_k = (9, 2, 2, \underbrace{1, \cdots, 1}_{k-3})$. Note that both are partitions of $k+10$ having $k$ parts each. We have $pre_k(\alpha_k) = pre_k(\beta_k) = 36$. More specifically, for $k=3$, this gives us the partitions $(6, 6, 1), \ (9, 2, 2)$, of 13 having the same image, 36, under $pre_3$. This, in fact, readily generates an infinite family of pairs of partitions with identical $pre_3$ images, namely, $(6m, 6m, m), \ (9m, 2m, 2m)$, for each $m \geq 1$. Each is a partition of $13m$, with the $pre_3$ image being $36m^3$. We now give another family where the parts of each of the partitions are relatively prime, namely, $(3m, 2m-1, 2), \ (4m-2, m, 3), \ m \geq 3$. For $(3m, 2m-1, 2)$, clearly, 2 and $2m-1$, being odd, are relatively prime and hence the triplet of numbers has this property as well. And for $(4m-2, m, 3)$, if 3 does not divide $m$, then both of them are relatively prime. If not, then $4m-2$ is not divisible by 3 and hence is relatively prime to 3. In either case, the triplet of numbers has gcd equal to one. Moreover, observe that both are partitions of $5m+1$ having $3$ parts, with the identical $pre_3$ image $6m(2m-1)$. 

More generally, if $p > q$ are two primes, then the family of pairs of partitions given by $(q(1 + m(p-1)), \ 1 + m(q-1), \ p), \ (p(1 + m(q-1)), \ 1 + m(p-1), \ q)$, where $m \geq \frac{p-1}{q-1}$, are both partitions of $p+q+1+m(pq-1)$ having three parts with the same $pre_3$ image $pq(1 + m(p-1))(1 + m(q-1))$. The condition on $m$ is necessary to ensure that in the first named partition, we have $1 + m(q-1) \geq p$. The second family alluded to in the previous paragraph, that is, $(3m, 2m-1, 2), \ (4m-2, m, 3)$ can be obtained from the special case $p=3, \ q=2$. Moreover, for each $k \geq 3$, we can extend the pair at the beginning of this paragraph to $(q(1 + m(p-1)), \ 1 + m(q-1), \ p, \ \underbrace{1, \cdots, 1}_{k-3}), \ (p(1 + m(q-1)), \ 1 + m(p-1), \ q, \ \underbrace{1, \cdots, 1}_{k-3})$, with $m \geq \frac{p-1}{q-1}$, giving us an infinite three parameter family of pairs of partitions of $p+q+k+m(pq-1)-2$ with $k$ parts, having the same image, $pq(1 + m(p-1))(1 + m(q-1))$, under $pre_k$. That means, for infinitely many $n$, $pre_k$ fails to be injective on partitions of $n$ with $k$ parts.
\end{proof}

We now link pairs of maps of the type $pre_k$.
\begin{proof}[Theorem \ref{prekprel-k}][]
Let $\lambda = (\lambda_1, \cdots, \lambda_{\ell}), \ \mu = (\mu_1, \cdots, \mu_{\ell})$ with $pre_{\ell - k}(\lambda) = pre_{\ell - k}(\mu)$. Thus, the product of parts of the partitions $pre_{\ell - k}(\lambda), \ pre_{\ell - k}(\mu)$ are also the same. Since each $\lambda_i$ occurs once in exactly $\binom{\ell - 1}{\ell - k - 1}$ parts of $pre_{\ell - k}(\lambda)$, the product of parts of $pre_{\ell - k}(\lambda)$ is therefore $(\prod_{i=1}^{\ell} \lambda_i)^{\binom{\ell - 1}{\ell - k - 1}}$, and that of $pre_{\ell - k}(\mu)$ is, similarly, $(\prod_{i=1}^{\ell} \mu_i)^{\binom{\ell - 1}{\ell - k - 1}}$. Thus, $(\prod_{i=1}^{\ell} \lambda_i)^{\binom{\ell - 1}{\ell - k - 1}} = (\prod_{i=1}^{\ell} \mu_i)^{\binom{\ell - 1}{\ell - k - 1}}$, giving us $\prod_{i=1}^{\ell} \lambda_i = \prod_{i=1}^{\ell} \mu_i$. The fact that $pre_{\ell - k}(\lambda) = pre_{\ell - k}(\mu)$ means that every product of $\ell - k$ numbers from the partition $\lambda$ equals a similar kind of product of the parts of $\mu$, combined with $\prod_{i=1}^{\ell} \lambda_i = \prod_{i=1}^{\ell} \mu_i$, implies that there is a correspondence between the product of any $k$ parts of $\lambda$ with that of $\mu$. This thereby implies that $pre_k(\lambda) = pre_k(\mu)$. Since $pre_k$ is injective, this finally tells us that $\lambda = \mu$. Hence, $pre_{\ell-k}$ is also injective.
\end{proof}
\section{A proof of injectivity of $pre_2$ on partitions of $n$ with four, five and six parts}\label{456proof}
As mentioned in the introduction, we prove Theorem \ref{456} in the spirit of the `lattice of parts' approach of Ballantine, Beck and Merca \cite[Proposition 1]{BBM24}.
\begin{proof}[Theorem \ref{456}][]
Let $\lambda = (\lambda_1, \lambda_2, \lambda_3, \lambda_4)$, and $\mu = (\mu_1, \mu_2, \mu_3, \mu_4)$ be partitions of $n$ with $pre_2(\lambda) = pre_2(\mu) = \nu$. Then $\nu = (\lambda_1 \lambda_2, \lambda_1 \lambda_3, \cdots, \lambda_2 \lambda_4, \lambda_3 \lambda_4) = (\mu_1 \mu_2, \mu_1 \mu_3, \cdots, \mu_2 \mu_4, \mu_3 \mu_4)$. Therefore,
\begin{equation} \label{eqn1}
\lambda_1 \lambda_2 = \mu_1 \mu_2, \quad \lambda_1 \lambda_3 = \mu_1 \mu_3, \quad
\lambda_2 \lambda_4 = \mu_2 \mu_4, \quad \lambda_3 \lambda_4 = \mu_3 \mu_4. 
\end{equation}
In other words,
\begin{equation}\label{eqn2}
\frac{\lambda_1}{\mu_1} = \frac{\mu_2}{\lambda_2} = \frac{\mu_3}{\lambda_3} = \frac{\lambda_4}{\mu_4} = k \quad (\text{say, a positive rational number})
\end{equation} 
Since $\lambda_1 + \lambda_2 + \lambda_3 + \lambda_4 = n = \mu_1 + \mu_2 + \mu_3 + \mu_4$, we have from \eqref{eqn2}, $k \mu_1 + \mu_2/k + \mu_3/k + k\mu_4 = \mu_1 + \mu_2+ \mu_3 + \mu_4$. This implies that 
\begin{equation}\label{eqn3}
\mu_1 + \mu_4 = \frac{\mu_2 + \mu_3}{k}.
\end{equation}
Note that the ordering among the four indicated parts of $\nu$ is fixed, as shown in \eqref{eqn1}, but the remaining two parts are unrestricted, order-wise. We now consider two cases depending on two possible equalities.  \\
\textbf{Case 1:} $\lambda_1 \lambda_4 = \mu_1 \mu_4$, and $\lambda_2 \lambda_3 = \mu_2 \mu_3$. Then $k = \frac{\lambda_1}{\mu_1} = \frac{\mu_4}{\lambda_4}$. But from \eqref{eqn2}, we know that $\frac{\mu_4}{\lambda_4} = 1/k$. Thus, $k = 1/k$, implying $k=1$ (since $k$ is positive). Again, \eqref{eqn2} implies, for $k=1$, that $\lambda_i = \mu_i$ for all $i=1,2,3,4$, which offcourse means that $\lambda = \mu$. \\
\textbf{Case 2:} $\lambda_1 \lambda_4 = \mu_2 \mu_3$, and $\lambda_2 \lambda_3 = \mu_1 \mu_4$. Then $k \mu_1 \times k \mu_4 = \mu_2 \mu_3$, implying that 
\begin{equation}\label{eqn4}
\mu_1 \mu_4 = \frac{\mu_2 \mu_3}{k^2} = \frac{\mu_2}{k}\frac{\mu_3}{k}.
\end{equation} 
But from \eqref{eqn3}, \eqref{eqn4}, we get that $\mu_1, \mu_4$ are the roots of the quadratic equation $x^2 - (\frac{\mu_2}{k} + \frac{\mu_3}{k})x + \frac{\mu_2}{k}\frac{\mu_3}{k} = 0$. But the roots of this equation are clearly $\frac{\mu_2}{k}, \frac{\mu_3}{k}$ and $\frac{\mu_2}{k} \geq \frac{\mu_3}{k}$. Since $\mu_1 \geq \mu_4$, we deduce that $\mu_1 = \frac{\mu_2}{k}$ and $\mu_4 = \frac{\mu_3}{k}$. Moreover, $\mu_1 = \frac{\mu_2}{k} \leq \frac{\mu_1}{k}$ , so $k \leq 1$. Similarly, $\mu_4 = \frac{\mu_3}{k} \geq \frac{\mu_4}{k}$, so that $k \geq 1$. We thus conclude that $k=1$, which implies that $\lambda_i = \mu_i$ for all $i$ from \eqref{eqn2}. Hence $\lambda = \mu$. Thus, $pre_2$ is injective on partitions of $n$ with four parts.

We move on to the proof for partitions with five parts. Let $\lambda = (\lambda_1, \lambda_2, \lambda_3, \lambda_4, \lambda_5), \mu = (\mu_1, \mu_2, \mu_3, \mu_4, \mu_5)$ such that $pre_2(\lambda) = pre_2(\mu) = \nu$. From the ordering relations among the parts of $\nu$ as in the proof of partitions with four parts above, we have 
\begin{equation} \label{order relns 5}
\lambda_1 \lambda_2 = \mu_1 \mu_2, \ \lambda_1 \lambda_3 = \mu_1 \mu_3, \ \lambda_3 \lambda_5 = \mu_3 \mu_5, \ \lambda_4 \lambda_5 = \mu_4 \mu_5.
\end{equation}
We note that the product of parts of $pre_2(\lambda)$ and $pre_2(\mu)$ are equal since the partitions are themselves equal, giving us $(\lambda_1 \times \cdots \times \lambda_5)^4 = (\mu_1 \times \cdots \times \mu_5)^4$. Thus, $\prod_{i=1}^{5} \lambda_i = \prod_{i=1}^{5} \mu_i$. But  $\lambda_1 \lambda_2 = \mu_1 \mu_2$ and $\lambda_4 \lambda_5 = \mu_4 \mu_5$, which implies that $\lambda_3 = \mu_3$. From each of the relations in \eqref{order relns 5}, we can then deduce that $\lambda_i = \mu_i$ for the other values of $i$ as well. Therefore, $\lambda = \mu$ and $pre_2$ is injective on partitions of $n$ with five parts.

Coming to partitions having six parts finally, let $\lambda = (\lambda_1, \cdots, \lambda_6), \ \mu = (\mu_1, \cdots, \mu_6)$ with $pre_2(\lambda) = pre_2(\mu) = \nu$. As before, we have the ordering relations 
\begin{equation}\label{order relns 6}
\lambda_1 \lambda_2 = \mu_1 \mu_2, \ \lambda_1 \lambda_3 = \mu_1 \mu_3, \ \lambda_4 \lambda_6 = \mu_4 \mu_6, \ \lambda_5 \lambda_6 = \mu_5 \mu_6.
\end{equation}
We also have the product of parts of $\nu$ equal to $(\prod_{i=1}^{5}\lambda_i)^5 = (\prod_{i=1}^{5} \mu_i)^5$. This implies that $\prod_{i=1}^{5}\lambda_i = \prod_{i=1}^{5} \mu_i$. From \eqref{order relns 6}, we see that $\lambda_1 \lambda_2 \lambda_5 \lambda_6 = \mu_1 \mu_2 \mu_5 \mu_6$. Now, $\prod_{i=1}^{5}\lambda_i = \prod_{i=1}^{5} \mu_i$ implies that $\lambda_3 \lambda_4 = \mu_3 \mu_4$. In a similar vein, we get other order relations as listed below: (we include $\lambda_3 \lambda_4 = \mu_3 \mu_4$ too for convenience)
\begin{equation}\label{order relns II}
\lambda_3 \lambda_4 = \mu_3 \mu_4, \ \lambda_3 \lambda_5 = \mu_3 \mu_5, \ \lambda_2 \lambda_5 = \mu_2 \mu_5, \ \lambda_2 \lambda_4 = \mu_2 \mu_4.
\end{equation}
Therefore, using the relations in \eqref{order relns 6} and \eqref{order relns II}, we get the following equality of ratios:
\begin{equation}\label{ratios}
\frac{\lambda_1}{\mu_1} = \frac{\mu_2}{\lambda_2} = \frac{\mu_3}{\lambda_3} = \frac{\lambda_4}{\mu_4} = \frac{\lambda_5}{\mu_5} = \frac{\mu_6}{\lambda_6}.
\end{equation}
From \eqref{ratios}, we also get the additional relation $\lambda_1 \lambda_6 = \mu_1 \mu_6$. Collecting all the known relations so far, we have
\begin{equation}\label{all relations 1}
\lambda_1 \lambda_2 = \mu_1 \mu_2, \ \lambda_1 \lambda_3 = \mu_1 \mu_3, \ \lambda_1 \lambda_6 = \mu_1 \mu_6, \ \lambda_2 \lambda_4 = \mu_2 \mu_4, \ \lambda_2 \lambda_5 = \mu_2 \mu_5.
\end{equation}
\begin{equation}\label{all relations 2}
\lambda_3 \lambda_4 = \mu_3 \mu_4, \ \lambda_3 \lambda_5 = \mu_3 \mu_5, \ \lambda_4 \lambda_6 = \mu_4 \mu_6, \ \lambda_5 \lambda_6 = \mu_5 \mu_6.
\end{equation}
Out of the remaining parts of $\nu = pre_2(\lambda)$, the largest can either be $\lambda_1 \lambda_4$ or $\lambda_2 \lambda_3$. We have a similar situation with $pre_2(\mu)$ as well. So we will either have $\lambda_1 \lambda_4 = \mu_1 \mu_4, \ \lambda_2 \lambda_3 = \mu_2 \mu_3$ or $\lambda_1 \lambda_4 = \mu_2 \mu_3, \ \lambda_2 \lambda_3 = \mu_1 \mu_4$.

\textbf{Case 1:} $\lambda_1 \lambda_4 = \mu_1 \mu_4, \ \lambda_2 \lambda_3 = \mu_2 \mu_3$. This implies that $\frac{\lambda_2}{\mu_2} = \frac{\mu_3}{\lambda_3} = \frac{\mu_2}{\lambda_2}$, the latter equality following from \eqref{ratios}. But this gives $\lambda_2^2 = \mu_2^2$, and so $\lambda_2 = \mu_2$ as both are positive numbers. From \eqref{ratios}, we then conclude that $\lambda = \mu$.

\textbf{Case 2:} $\lambda_1 \lambda_4 = \mu_2 \mu_3, \ \lambda_2 \lambda_3 = \mu_1 \mu_4$. Note that, after considering all the order relations in \eqref{all relations 1} and \eqref{all relations 2}, the largest parts among the remaining parts of $pre_2(\lambda)$ and $pre_2(\mu)$ are $\lambda_1 \lambda_5$ and $\mu_1 \mu_5$ respectively. Therefore, we deduce that $\lambda_1 \lambda_5 = \mu_1 \mu_5$. Thus, $\frac{\lambda_1}{\mu_1} = \frac{\mu_5}{\lambda_5} = \frac{\lambda_5}{\mu_5}$, the last equality coming from \eqref{ratios}. So, with $\lambda_5^2 = \mu_5^2$, we deduce again that $\lambda_5 = \mu_5$ and hence $\lambda = \mu$.
\end{proof}

\section{Partitions which are in the image of $pre_2$}\label{pre2n}
We next give lower bounds for the function $pre_2(n)$, the number of partitions of $n$ which are in the image of $pre_2$.
\begin{proof}[Theorem \ref{pre2nsize}][]
\textbf{Case 1:} Let $n+1$ be a composite number that is not a perfect square. Then for each factorization $n+1 = (a+1)(b+1)$, with $a, b \geq 1$, we obtain $n = ab + a +b = ab + b + a = pre_2(a + b +1)$ or $pre_2(b + a +1)$. That is, for each factorization of $n+1$ into two numbers greater than $1$, we get a partition for $n$ that is in the image of $pre_2$. Moreover, interpretating $n+1 = (n+1)(0+1)$ with $b = 0$, gives us $n = n = pre_2(n+1)$. Thus, every pair of divisors of $n+1$ gives a partition of $n$ that is in the image of $pre_2$. Therefore, $pre_2(n) \geq \frac{\tau(n+1)}{2}$. Now, if $n+1$ is a prime, then $\frac{\tau(n+1)}{2} = 1$, and hence we again have $pre_2(n) \geq \frac{\tau(n+1)}{2}$.

\textbf{Case 2:} If $n+1$ is a perfect square, then in addition to its factorizations considered in Case 1, we also have $n+1 = (a+1)^2$ for some $a \geq 1$. This gives us $n = a^2 + a +a = pre_2(a+a+1)$. Thus, using Case 1, we then have $pre_2(n) \geq \frac{\tau(n+1) - 1}{2} + 1 = \frac{\tau(n+1) + 1}{2}$. 
\end{proof}


\section{Concluding Remarks}

We showed in Theorem \ref{notinjk} that for $k \geq 3$, there are infinitely many $n$ for which $pre_k$ is not injective on partitions of $n$ with $k$ parts. But do there exist $n$ for which the injectivity still holds?
\begin{problem}
Are there only finitely many $n$ such that $pre_3$ is injective on partitions of $n$ with three parts? In particular, do we have non-injectivity for all $n > 18$, as the observed data seems to suggest? What about this phenomenon for $pre_k$ for $k \geq 4$? For sufficiently large $n$ (depending on $k$), do we have that $pre_k$ is not injective on partitions of $n$ with $k$ parts?
\end{problem}
It would be interesting to see if the proof of Theorem \ref{456} can be extended to partitions with more than six parts.
\begin{problem}
Can a proof by lattice analysis be done for the injectivity of $pre_2$ on partitions of $n$ with more than six parts?
\end{problem}
In the course of the proof of Case 1 of Theorem \ref{pre2nsize}, we saw that if $n+1$ is a prime, then $pre_2(n) \geq 1$. We thus ask
\begin{problem}
Does there exist an $n \geq 1$ such that $pre_2(n) = 1$?
\end{problem}


%
%
%

\end{document}